\renewcommand{\O}{\Omega}
\renewcommand{\a}{\alpha}
\newcommand{\e}{\epsilon}
\renewcommand{\L}{\Lambda}
\renewcommand{\l}{\lambda}
\renewcommand{\d}{\delta}
\newcommand{\s}{\sigma}
\newtheorem{thm}{Theorem}[section]
\newtheorem{prop}[thm]{Proposition}
\newtheorem{lemma}[thm]{Lemma}
\newtheorem{preremark}[thm]{Remark}
\numberwithin{equation}{section}
\newcommand{\norm}[1]{\left\Vert#1\right\Vert}
\newcommand{\R}{\mathbb R}
\newcommand{\grad} {\nabla}
\DeclareMathOperator*{\osc}{osc}
\def\XXint#1#2#3{{\setbox0=\hbox{$#1{#2#3}{\int}$}
       \vcenter{\hbox{$#2#3$}}\kern-.5\wd0}}
\newcommand{\meanbar}[1]{%
\setbox0 = \hbox{$#1 \int$}
\hbox to 0pt{%
\thinspace
\hskip 0.1\wd0
\raise 0.5\ht0
\hbox{%
\lower 0.5\dp0
\hbox{\rule{0.8\wd0}{2\linethickness}}
}%
\hss
}%
}
    \newcounter{myfootertablecounter}
\begin{document}

\title{The Two-Phase Stefan Problem with Anomalous Diffusion} 
%\textcolor{red}{Version 4}}

%\authorrunning{Non-dynamic Parabolic Fractional Obstacle Problem} % if too long for running head

\author{Ioannis Athanasopoulos, Luis Caffarelli, Emmanouil Milakis}
\date{}
\maketitle

%\doublespacing

\begin{abstract}
The non-local in space two-phase Stefan problem (a prototype in phase change problems) can be formulated via a singular nonlinear parabolic integro-differential equation which admits a unique weak solution. This formulation makes Stefan problem to be part of the General Filtration Problems; a class which includes the Porous Medium Equation. In this work, we prove that the weak solutions to both Stefan and Porous Media problems are continuous.
\end{abstract}

AMS Subject Classifications: 35R09, 45K05, 80A22, 35R11.

\textbf{Keywords}: Stefan Problem, Porous Media Problem, Parabolic Free Boundary Problem, Anomalous Diffusion, Non-local operators.

%\tableofcontents
\section{Introduction}\label{intr}
In this paper we study the parabolic, non-local in space, initial-boundary value problems i.e.
\begin{align}
\begin{cases} \label{eq1.1}
 \beta_t(u(x,t))\ni{L} u(x,t) \ \ & (x,t)\in Q:=\Omega \times (0,T] \\
u(x,t)=0 \ \ & (x,t)\in \Omega^c\times (0,T)] \\
 u(x,0)=u_0 (x) \  \  & x\in \Omega\times\{0\}
 \end{cases}
\end{align}
where
$${L}u(x,t)=\int_{\R^n}(u(y,t)-u(x,t))K(x,y,t)dy$$
with $K$ symmetric in $x$ and $y$ i.e. $K(x,y,t)=K(y,x,t)$ for any $x\neq y$ and satisfying
$$\frac{\mathbf{1}_{\{|x-y|\leq 2\}}}{\Lambda}\frac{1}{|x-y|^{n+\alpha}}\leq K(x,y.t)\leq \frac{\Lambda}{|x-y|^{n+\alpha}}$$
for some $\Lambda>0$ and $\alpha\in(0,2)$
and  
$\beta$ is either (i) a monotone graph
\begin{align}\beta(x)= \begin{cases} \label{eq1.2} 
ax-1, & x<0\ (a>0) \cr [-1,+1], & x=0 \cr bx+1, & x>0\ (b>0) 
\end{cases}
\end{align}
or (ii) a continuous increasing real-valued function satisfying:
\begin{align}
\begin{cases} \label{eq1.3}
(a) \ \  & \beta'(x)\  \text{exists for all}\ x\neq 0 \\
(b) \ \ & \beta(0)=0 \\
(c) \ \ & \beta'(x)\geq c_1>0 \\
(d) \ \ & \beta'(x)\leq C(\epsilon)\ \text{for}\ x \in (-\frac{1}{\epsilon},-\epsilon)\cup (\epsilon,\frac{1}{\epsilon}) \ \ \text{and} \ \  \epsilon >0.
\end{cases}
\end{align}

Equations (\ref{eq1.1}) and (\ref{eq1.2}) describe the flow of heat within a substance which changes phase at temperature zero and equations (\ref{eq1.1}) and (\ref{eq1.3}) includes the porous media equation.
The main result in this work (Theorem \ref{mth}) asserts  that the solution $u$ is a continuous function of $x$ and $t$ whose modulus of continuity depends on $\beta$. In case (ii), if, in addition, we assume that $\beta$ near zero has homogeneous behavior as that of porous media, i.e. $\beta(u)  \sim   u^{1/m} , m>1$, then we obtain H\"older modulus of continuity. 

In \S\ref{prel} we give the definition of weak solutions and list some symbols used throughout this paper. In \S\ref{exist} we construct solutions $u^{\varepsilon}$ to appropriate approximate problems which yield continuity, i.e.  $u^{\varepsilon}$ are defined pointwise. In order to obtain a oscillation decay we consider two alternatives: one is when on average $u^\varepsilon$ is very close to the singularity of $\beta$ and the second when it is far from it. The main estimate i.e. Lemma \ref{mlemma} takes care the former case. Lemma \ref{altl} of \S\ref{osc} handles the latter, more delicate alternative. By iteration we obtain in \S\ref{cont} the continuity of the approximate solutions and consequently our reult.

The classical two-phase Stefan problem was treated in \cite{CE} and the fractional two-phase Stefan problem in \cite{AC}. In \cite{AC} the treatment relied on the extension to an additional one dimension (see \cite{CS}). In this work, the kernel we introduce does not allow any form of extension and therefore the approach is direct and more general. In fact, the result obtained here, generalizes the fractional as well as the classical cases. In a forthcoming paper we shall treat the case where the non-locality occurs in space and time simultaneously, such as in masters kernels, as it was done for the fractional case in \cite{AC}; thus completing the generalization of the results in [1].

\section{Preliminaries}\label{prel}

In the present paper we consider evolution equations involving non-local operators. When we say that an equation of the form
$$\partial_t w(x,t)=\int_{\R^n}[w(y,t)-w(x,t)]K(x,y,t)dy$$
is satisfied in the weak sense in $Q$, we mean that for every smooth test function $\eta$, which is vanishing on the parabolic boundary of $Q:=\Omega\times (0,T]$ and at $\{t=T\}$, we have
$$\int_{0}^T\bigg[\int_{\R^n}\partial_t w(x,t)\cdot\eta(x,t)dx+D(w,\eta)\bigg]dt=0
$$
where
$$D(u,v):=\int_{\R^n}\int_{\R^n}(u(x,t)-u(y,t))K(x,y,t)((v(x,t)-v(y,t))dydx.$$
Similarly, in equation (\ref{eq1.1}), when we write 
 $$\beta_t(u(x,t))\ni{L} u(x,t) \ \ \text{for} \ (x,t)\in Q$$
 we mean that 
 $$\int_{0}^T\bigg[\int_{\R^n}\partial_tv(x,t)\eta(x,t)dx+D(u,\eta)\bigg]dt=0
$$
for some $v\in\beta(u)$ a.e. and all test function $\eta$ vanishing in the parabolic boundary and at $\{t=T\}$. Note also the extra condition in (\ref{eq1.1}), $u(x,t)$ is always assumed to be zero in $\Omega^c\times (0,T)]$. \\

\textbf{Notation}s:
$$\osc_D  u:=\max_D u-\min_D u\ \ \ \ \  \ \ \ \ \ \ \ \ \ \ \ \ \ \ \ \ \ \ \ \  \ \ \ \ \ \ \ \ \ \ \ \ \ \ \ \ \ \ \ \ \ \ \ \ \ \ \ \ \ \ \  \ \ \ \ \ \ \ \ \ \ \ \ \ \ \ \ \ \ \ \ \ \ \ \ \ \ \ \ \ \ \  \ \ $$

$\grad u=(u_{x_1},...,u_{x_n})$ denotes the spatial gradient of $u$.

$Q:=\O\times (0,T),\ \ \O\subset \R^n $

$Q_1\equiv Q_1(0,0),\ \ Q_R\equiv Q_R(0,0)$ 

$Q_R(x_0,t_0):=\{(x,t): |x-x_0|<R, \  t_0-R^\a<t<t_0\}$

$B_R(x_0)$ is the open ball of radius $R$ centered at $x_0$.

$|A|$ is the Lebesgue measure of the set $A$.

$\fint_{A}f dx=\frac{1}{|A|}\int_A f dx$ is the average of $f$ over $A$.

$u^+=\max\{u,0\}$, $u^-=-\min\{u,0\}$

$\mathbf{1}_A$ is the characteristic function of the set $A$.\\
Although we use the same letter for a universal constant, its precise value can vary from line to line.

\section{Existence of the approximate problems}\label{exist}
If we write $\beta:=\phi^{-1}$ then  (\ref{eq1.1}) can be expressed by
\begin{align}
\begin{cases} \label{eq3.1}
\partial_t v-L\phi(v)=0 \ \ & (x,t)\in Q:=\Omega \times (0,T] \\
v(x,t)=0 \ \ & (x,t)\in \Omega^c\times (0,T)] \\
 v(x,0)=v_0 (x) \  \  & x\in \Omega\times\{0\}.
 \end{cases}
\end{align}

 Formulation (\ref{eq3.1}) represents a class of nonlinear degenerate diffusion problems called Generalized Filtration Equations (see \cite{TEV}) which includes the Stefan Problem, the Porous Media Problem, and the Fast Diffusion Problem. Weak solutions can be constructed using the theory of \cite{BB}, in the general framework of proving existence of weak solutions to problems which involve maximal monotone operators in real Hilbert spaces. 
 
The regularized problem for our specific choice of $\beta$ (and that of $\phi$ through $\beta:=\phi^{-1}$), by the regularity theory developed in \cite{CCV}, admits a uniformly bounded solution which will be smooth since the sequence of nondegenerate increasing nonlinearities $\beta_\epsilon$ is smooth. This fact leads to the proof of existence of weak solutions as it is described, for instance, in the proof of Theorem 2.4 of \cite{TEV2} where the penalization for the boundary Stefan problem (see \cite{AC}) is considered.  Actually, when we say that we approximate the problem (\ref{eq3.1}), we mean that we choose a sequence $\{\phi_{\epsilon_n}\} $ of smooth functions such that 
 $$0<\phi_{\epsilon_n}\leq \max \bigg\{\frac{1}{a},\frac{1}{b}\bigg\},  \ \  \phi_{\epsilon_n}(0)=0, \ \  \phi_{\epsilon_n}\rightarrow\phi, \ \  \text{uniformly on} \ \  \R  $$
 where 
 \begin{align}\phi(x)= \begin{cases} \label{eq3.2} 
\frac{x+1}{a}, & x\leq -1  \cr 0, & -1\leq x \leq 1 \cr \frac{x-1}{b}, & x>1  
\end{cases}
\end{align}
and we consider the problem
\begin{align}
\begin{cases} \label{eq3.3}
\partial_t v_{\epsilon_n}-L(\phi_{\epsilon_n}(v_{\epsilon_n}))=0 \ \ & (x,t)\in Q:=\Omega \times (0,T] \\
v_{\epsilon_n}(x,t)=0 \ \ & (x,t)\in \Omega^c\times (0,T)] \\
 v_{\epsilon_n}(x,0)=\beta_{\epsilon_n}(u_0 (x)) \  \  & x\in \Omega\times\{0\}
\end{cases}
\end{align} 
where $\beta_{\epsilon_n}=\phi_{\epsilon_n}^{-1}$. Obviously, problem (\ref{eq3.3}) admits a weak solution and moreover, using the results of \cite{CCV}, this solution is H\"older continuous whose H\"older norm depends on $\epsilon_n$ but its $L^{\infty}$ bound is independent of $\epsilon_n$. Since our result is obtained via apriori estimates, this is enough in order to proceed.

\section{Principal estimate}\label{prest}
We want to prove an oscillation decay for  the approximate solution $u^{\epsilon} $, and we do it in two steps. In this section we are dealing with first one. Our starting point is the approximate problem to (\ref{eq1.1}) i.e.
\begin{eqnarray}\label{eq0}
 \partial_t \beta_\epsilon
 (u^\epsilon(x,t)-Lu^\epsilon(x,t)=0 & \text{in}  & \Omega\times(0,T] \nonumber  \\
u^\epsilon(x,t)=0   & \text{on}  & (\R^n\setminus\Omega)\times(0,T] \\ 
u^\epsilon(x,0)=u_0 (x) & \text{in} & \Omega\times \{0\} \nonumber 
\end{eqnarray}
where $\beta_\epsilon$ is piecewise linear and $u^\epsilon$ is continuous with $L^\infty$ bound independent of $\epsilon$. We seek a priori  bounds independent of $\epsilon$. Our approach is that of DeGiorgi's (see \cite{DG}). 

In order to simplify matters, we normalize the solution i.e. we take $0\leq u^\epsilon\leq1$ in a cylundrical domain and we obtain in the interior of this domain a decay in the oscillation, that is, $\osc{u^\epsilon}<1/2 $.  Notice that $\beta_\epsilon(0)\neq0$ but $\beta_\epsilon$ will be zero at some other point in $[0,1]$ (i.e. at $\frac{-m}{M-n}$ where $M>0$ and $m<0$ are the upper and lower bounds of the unnormalized solution, respectively).
\begin{lemma}\label{mlemma}
Let $Q_1:=B_1\times[-1,0]\subset\Omega\times[-T,T]$. Suppose that $u^\epsilon$ is a solution to (\ref{eq0}) with
$$0\leq u^{\epsilon}\leq 1$$
in $Q_1$, then there exists a constant $\sigma>0$ independent of $\epsilon$ such that 
$$\fint_{Q_1} u^\epsilon dx<\sigma$$
implies
$$u^\epsilon<\frac{1}{2}$$
in $Q_{1/2}$.

\end{lemma}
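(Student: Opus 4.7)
The plan is a De~Giorgi $L^\infty$-from-$L^1$ iteration on a nested family of parabolic cylinders. Set $r_j := \tfrac{1}{2} + 2^{-j-1}$ and $k_j := \tfrac{1}{2} - 2^{-j-1}$, and let $\eta_j$ be smooth cutoffs with $\eta_j \equiv 1$ on $Q_{r_{j+1}}$, supported in $Q_{r_j}$. Define
\begin{equation*}
A_j := \iint_{Q_{r_j}} (u^\epsilon - k_j)_+^2 \, dx\, dt.
\end{equation*}
Because $0 \le u^\epsilon \le 1$ implies $(u^\epsilon)^2 \le u^\epsilon$, the hypothesis delivers $A_0 \le \iint_{Q_1} u^\epsilon\, dx\, dt \lesssim \sigma$. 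The goal is a nonlinear recursion $A_{j+1} \le C\, b^j A_j^{1+\kappa}$ with $\kappa = \alpha/n > 0$, which drives $A_j \to 0$ once $\sigma$ is small enough and yields $(u^\epsilon - \tfrac{1}{2})_+ \equiv 0$ on $Q_{1/2}$.

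The main tool is a nonlocal Caccioppoli inequality obtained by testing the regularized equation against $\varphi := (u^\epsilon - k_j)_+ \eta_j^2$. Introducing the convex primitive
\begin{equation*}
\Psi_\epsilon(s,k) := \int_k^s \beta_\epsilon'(\tau)(\tau - k)_+\, d\tau,
\end{equation*}
the time term becomes $\partial_t \int \Psi_\epsilon(u^\epsilon,k_j)\eta_j^2\, dx - \int \Psi_\epsilon(u^\epsilon,k_j)\,\partial_t(\eta_j^2)\, dx$. The piecewise-linear structure of $\beta_\epsilon$ delivers the $\epsilon$-uniform slope bound $\beta_\epsilon' \ge c_1 := \min(a,b) > 0$, hence $\Psi_\epsilon(s,k) \ge \tfrac{c_1}{2}(s-k)_+^2$; this is the $L^2$ coercivity that survives the singularity of $\beta$. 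By symmetry of $K$,
\begin{equation*}
-\int L u^\epsilon\, \varphi\, dx = \tfrac{1}{2}\iint \bigl(u^\epsilon(x) - u^\epsilon(y)\bigr)\bigl(\varphi(x) - \varphi(y)\bigr) K(x,y)\, dx\, dy,
\end{equation*}
and the standard algebraic decomposition of \cite{CCV} extracts the Gagliardo seminorm of $(u^\epsilon - k_j)_+ \eta_j$ attached to the lower bound $K \ge \Lambda^{-1}\mathbf{1}_{|x-y|\le 2}|x-y|^{-n-\alpha}$, modulo an $|\eta_j(x)-\eta_j(y)|^2$ cutoff error and a nonlocal tail.

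Combining both sides, then applying the fractional Sobolev embedding $H^{\alpha/2}(\R^n) \hookrightarrow L^{2n/(n-\alpha)}(\R^n)$ slicewise in time and interpolating with the $L^\infty_t L^2_x$ control obtained from the $\sup_t$ part of the energy inequality, upgrades integrability to $L^{2(1+\kappa)}$ in space-time with $\kappa = \alpha/n$. Chebyshev on $\{u^\epsilon > k_{j+1}\}\cap Q_{r_{j+1}}$ together with the level gap $k_{j+1}-k_j = 2^{-j-2}$ closes the recursion, and the classical De~Giorgi iteration lemma finishes the proof. The principal obstacle, intrinsic to the non-local setting, is the tail contribution of the form $\iint_{Q_{r_j} \times (Q_{r_j})^c} (u^\epsilon(x) - k_j)_+ \eta_j^2(x)\bigl(u^\epsilon(y) - k_j\bigr)_- K(x,y)\, dx\, dy$; fortunately $0 \le u^\epsilon \le 1$ globally, combined with $u^\epsilon \equiv 0$ outside $\Omega$ and the upper bound $K \le \Lambda|x-y|^{-n-\alpha}$, majorizes it by a constant multiple of $|\{u^\epsilon > k_j\} \cap Q_{r_j}|$, itself bounded by $4^{j+1}A_j$ via Chebyshev and therefore absorbed into the recursion at the price of a larger geometric constant $b$.
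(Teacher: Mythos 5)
Your scheme is essentially the paper's own proof: the same De Giorgi iteration with test functions $(u^\epsilon-k_j)_+\eta_j^2$, the same convex primitive of $\beta_\epsilon$, a Caccioppoli-type energy inequality, the fractional Sobolev embedding combined with the $L^\infty_tL^2_x$ bound, Chebyshev on the level sets, and a geometric recursion on levels increasing to $\tfrac12$ in shrinking cylinders. There is, however, one step where the argument as you state it would fail, and it is precisely where the singularity of $\beta$ enters. After integrating by parts in time you must control $\iint \Psi_\epsilon(u^\epsilon,k_j)\,\partial_t(\eta_j^2)\,dx\,dt$, and the only bound you record for $\Psi_\epsilon$ is the lower bound $\Psi_\epsilon(s,k)\ge \tfrac{c_1}{2}(s-k)_+^2$. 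The matching quadratic upper bound carries the constant $\sup\beta_\epsilon'\sim\epsilon^{-1}$ and would destroy exactly the $\epsilon$-uniformity the lemma asserts. The uniform estimate is only linear: $\Psi_\epsilon(s,k)\le(\beta_\epsilon(s)-\beta_\epsilon(k))(s-k)_+\le(\beta_\epsilon(1)-\beta_\epsilon(0))(s-k)_+$, with $\beta_\epsilon(1)-\beta_\epsilon(0)$ bounded independently of $\epsilon$. This injects a \emph{linear} term $\iint(u^\epsilon-k_j)_+$ into the right-hand side of the energy inequality --- this is why the paper's estimate \eqref{eq5} has both $((u-k)^+)^2$ and $(u-k)^+$ on the right --- and it must then be absorbed via Young's inequality together with the Chebyshev bound $|\{u^\epsilon>k_j\}\cap Q_{r_j}|\lesssim 4^{j}A_{j-1}$, machinery you already deploy elsewhere but never connect to this term. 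Without this correction the recursion constant depends on $\epsilon$ and $\sigma$ cannot be chosen uniformly.

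A smaller point: the term you single out as the ``principal obstacle'', namely $\iint (u^\epsilon-k_j)_+(x)\eta_j^2(x)(u^\epsilon-k_j)_-(y)K(x,y)\,dy\,dx$, needs no estimate at all. In the decomposition of $D(u,\eta_j^2(u-k_j)_+)$ into the contributions of $(u-k_j)_+$ and $-(u-k_j)_-$, the second piece equals $2\iint(u-k_j)^-(x)\,(\eta_j^2(u-k_j)^+)(y)K(x,y)\,dy\,dx\ge 0$, as in \eqref{eq4a}, and it sits on the coercive side of the identity, so it may simply be discarded; your proposed bound by $C\,|\{u^\epsilon>k_j\}\cap Q_{r_j}|$ is unnecessary (and, near the diagonal, not as immediate as you suggest). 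The genuine cutoff error in the nonlocal setting is the $(\eta_j(x)-\eta_j(y))^2$ term, which you do treat correctly.
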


\begin{proof}
We start by developing the necessary energy inequalities for these equations. We assume that $\beta_\epsilon$ are smooth approximations to $\beta$ satisfying $\beta_{\epsilon}^{'}\geq c_1>0$, and $\beta_\epsilon$ is bounded on $\R$. For simplicity, we drop the  $\epsilon$ subscript. 
%\begin{eqnarray}\label{eq0}
% \partial_t \beta(u(x,t)+Lu(x,t)=0 & \text{in}  & \Omega\times(-T,T] \nonumber  \\
%u(x,t)=0   & \text{on}  & \R^n\setminus\Omega%\times(-T,T] \\ 
%u(x,0)=u_0 (x) & \text{in} & \Omega\times {\{-T\}} %\nonumber 
%\end{eqnarray}
%\begin{equation}\label{eq0}\partial_t \beta(u(x,t)+Lu(x,t)=0 \ \ \ \ \text{in} \ \ \ \Omega\times(-T,T|
%\end{equation}
%$$u(x,t)=0 \ \ \ \ \text{on} \ \ \ \ \R^n\setminus\Omega\times(-T,T|$$
%$$u(x,0)=u_0 (x) \ \ \ \ \text{in} \ \ \ \Omega\times {\{-T\}}$$
%where
%$$Lu(x,t)=\int_{R^n}(u(x,t)-u(y,t))K(x,y,t)dy$$
%with $K$ symmetric in $x$ and $y$ i.e. $K(x,y,t)=K(y,x,t)$ for any $x\neq y$and satisfying
%$$\frac{\mathbf{1}_{\{|x-y|\leq 2\}}}{\Lambda}\frac{1}{|x-y|^{n+\alpha}}\leq K(x,y.t)\leq \frac{\Lambda}{|x-y|^{n+\alpha}}$$
%for some $\Lambda>0$ and $\alpha\in(0,2)$
Choose a smooth cutoff function $\zeta$ vanishing near the parabolic boundary of $Q_1:=B_1\times (-1,0]$ and $k>0$. Therefore in the weak formulation of (\ref{eq0}) with $\zeta^2(u-k)^+$ as a test function we have
\begin{equation}\label{maineq}\int_{-1}^0\bigg[\int_{\R^n}\partial_t \beta(u(x,t))(\zeta^2(u-k)^+)(x,t)dx+\frac{1}{2} D(u,\zeta^2(u-k)^+)\bigg]dt=0
\end{equation} 
where
$$D(u,v):=\int_{\R^n}\int_{\R^n}(u(x,t)-u(y,t))K(x,y,t)((v(x,t)-v(y,t))dydx.$$
Observe that
\begin{equation}\label{eq1}D(u,\zeta^2(u-k)^+)=D((u-k)^+,\zeta^2(u-k)^+)+D(-(u-k)^-,\zeta^2(u-k)^+).\end{equation}

Multiplying out and rearranging we obtain, for the first term of (\ref{eq1}), 
$$D((u-k)^+,\zeta^2(u-k)^+)=\int_{\R^n} \int_{\R^n}\bigg[(\zeta(u-k)^+)^2(x,t)-(\zeta^2(x,t)+\zeta^2(y,t))(u-k)^+(x,t)(u-k)^+(y,t) $$ 
$$+(\zeta(u-k)^+)^2(y,t)\bigg]K(x,y,t)dydx$$
%and using the identity $a^2+b^2=(a-b)^2+2ab$ we deduce that
\begin{equation}\label{eq2}
%D((u-k)^+,\zeta^2(u-k)^+)
=D(\zeta(u-k)^+,\zeta(u-k)^+)-\int_{\R^n}\int_{\R^n}(\zeta(x,t)-\zeta(y,t))^2(u-k)^+(x,t)(u-k)^+(y,t)K(x,y,t)dydx\end{equation}
where we have used the identity $a^2+b^2=(a-b)^2+2ab$.

We want to estimate both terms in (\ref{eq2}). For the first term, 
%using the definition of the fractional Sobolev spaces, we have
%$$\mathbb{||}\zeta(u-k)^+||^2_{H^\frac{\alpha}{2}}:=\int_{\R^n}
%\int_{\R^n}
%\frac{[(\zeta(u-k)^+ )(x,t)
%-(\zeta(u-k)^+ )(y,t)]^2}{|x-y|^{n+\alpha}}dydx$$
%$$=\iint\limits_{|x-y|\leq\frac{R}{2}}\frac{[(\zeta(u-k)^+ )(x,t)
%-(\zeta(u-k)^+ )(y,t)]^2}{|x-y|^{n+\alpha}}dydx+\iint\limits_{|x-y|>\frac{R}{2}}\frac{[(\zeta(u-k)^+ )(x,t)
%-(\zeta(u-k)^+ )(y,t)]^2}{|x-y|^{n+\alpha}}dydx$$
%$$=\iint\limits_{\R^{2n}}\frac{[(\zeta(u-k)^+ )(x,t)
%-(\zeta(u-k)^+ )(y,t)]^2}{|x-y|^{n+\alpha}}
%\mathbf{1}_{\{|x-y|\leq \frac{R}{2}\}}dydx 
%+\iint\limits_{|x-y|>\frac{R}{2}}\frac{[(\zeta(u-k)^+ )(x,t)
%-(\zeta(u-k)^+ )(y,t)]^2}{|x-y|^{n+\alpha}}dydx$$
%$$\leq \Lambda \iint\limits_{\R^{2n}}[(\zeta(u-k)^+ )(x,t)-(\zeta(u-k)^+ )(y,t)]^2K(xy,t)dydx+2\iint\limits_{|x-y|>\frac{R}{2}}\frac{[(\zeta(u-k)^+ )(x,t)]^2+
%[(\zeta(u-k)^+ )(y,t)]^2}{|x-y|^{n+\alpha}}dydx$$
%where we have used the inequality $(a-b)^2\leq 2(a^2+b^2)$ for the second term, and finally
%$$\mathbb{||}\zeta(u-k)^+||^2_{H^\frac{\alpha}{2}}\leq \Lambda D(\zeta(u-k)^+,\zeta(u-k)^+)+\int_{R^n}(\zeta(u-k)^+)^2(x,t)\bigg(\int\limits_{|y-x|>\frac{R}{2}}\frac{1}{|x-y|^{n+\alpha}}dy\bigg)dx$$
%or
%$$\mathbb{||}\zeta(u-k)^+||^2_{H^\frac{\alpha}{2}}\leq \Lambda D(\zeta(u-k)^+,\zeta(u-k)^+)+\frac{2^{\alpha+2}\omega_n}{\alpha R^\alpha}\int_{R^n}(\zeta(u-k)^+)^2(x,t)dx$$
$$D(\zeta(u-k)^+,\zeta(u-k)^+):=\iint\limits_{\R^{2n}}[(\zeta(u-k)^+ )(x,t)-(\zeta(u-k)^+ )(y,t)]^2K(x,y,t)dydx\ \ \ \ \ \ \ \ \ \ \ \ \ \ \ \ \ \ \ \ \ $$
$$ \ \ \ \ \ \ \ \ \ \\ \ \ \ \ \ \ \ \ \ \ \ \geq\frac{1}{\Lambda}\iint\limits_{\R^{2n}}\frac{\big[(\zeta(u-k)^+ )(x,t)
-(\zeta(u-k)^+ )(y,t)\big]^2}{|x-y|^{n+\alpha}}
\mathbf{1}_{\{|x-y|\leq \frac{1}{2}\}}dydx$$
$$\ \ \ \ \ =\frac{1}{\Lambda}\iint\limits_{\R^{2n}}\frac{[(\zeta(u-k)^+ )(x,t)
-(\zeta(u-k)^+ )(y,t)]^2}{|x-y|^{n+\alpha}}dydx$$
$$\ \ \ \ \ \ \ \ \ \ \ \ \ \ \ \ \ \ \ \ \ \ \ \ \ \ \ \ \ \ \ \ \ \ \ \ \ \ \ \      \ \ \ \ \ \ \ \ \ \ -\frac{1}{\Lambda}\iint\limits_{|x-y|>\frac{1}{2}}\frac{[(\zeta(u-k)^+ )(x,t)
-(\zeta(u-k)^+ )(y,t)]^2}{|x-y|^{n+\alpha}}dydx$$  
$$\ \ \ \ \ \ \geq\frac{1}{\Lambda}\iint\limits_{\R^{2n}}\frac{[(\zeta(u-k)^+ )(x,t)-(\zeta(u-k)^+ )(y,t)]^2}{|x-y|^{n+\alpha}}dydx$$  
$$\ \ \ \ \ \ \ \ \ \ \ \ \ \ \ \ \ \ \ \ \ \ \ \ \ \ \ \ \ \ \ \ \ \ \ \ \ \ \ \      \ \ \ \ \ \ \ \ \ \ 
-\frac{2}{\Lambda}\iint\limits_{|x-y|>\frac{1}{2}}\frac{[(\zeta(u-k)^+ )(x,t)]^2+[(\zeta(u-k)^+ )(y,t)]^2}{|x-y|^{n+\alpha}}dydx$$
$$\ \ \ \ \ \ \ \ \ \ \ \ \ \ \ \ \ \ \ \ \ \ \ \ \ \ \ \ \ \ \ \geq\frac{1}{\Lambda}\norm{\zeta(u-k)^+}^2_{H^\frac{\alpha}{2}}-\frac{4}{\Lambda}\int_{\R^n}(\zeta(u-k)^+)^2(x,t)\bigg(\int\limits_{|y-x|>\frac{1}{2}}\frac{1}{|x-y|^{n+\alpha}}dy\bigg)dx$$
$$\ \ \ \ \ \ \ =\frac{1}{\Lambda}\norm{\zeta(u-k)^+}^2_{H^\frac{\alpha}{2}}-\frac{2^{\alpha+2}\omega_n}{\Lambda\alpha }\int_{\R^n}(\zeta(u-k)^+)^2(x,t)dx$$
where in line 4 above we have used the inequality $(a-b)^2\leq 2(a^2+b^2)$ for the second term and on line 5 the definition of fractional Sobolev spaces.

For the second term of (\ref{eq2}) we have
$$\iint\limits_{\R^{2n}}
(\zeta(x,t)-\zeta(y,t))^2(u-k)^+(x,t)(u-k)^+(y,t)K(x,y,t)dydx\ \ \ \ \ \ \ \ \ \ \ \ \ \ \ \ \ \ \ \ \ \ \ \ \ \ \ \ \ \ \ \ \ \ \ \ \ \ $$
$$\leq 2\int\limits_{B_1}(u-k)^+(x,t)\int\limits_{\R^n}(\zeta(x,t)-\zeta(y,t))^2K(x,y,t)dydx\ \ \ \ \ \ \ \ \ \ \ \ \ \ \ \ \ $$
$$\ \ \ \ \ \ \ \ \ \ \ \ \ \ \ \ \ \ \leq 2\Lambda\int\limits_{B_1}(u-k)^+(x,t)\bigg(\int\limits_{|y-x|>\frac{1}{2}}\frac{2}{|x-y|^{n+\alpha}}dy+\int\limits_{|x-y|\leq\frac{1}{2}}
\frac{|\grad\zeta(x+s_0(y-x)|^2}{|x-y|^{n+\alpha}}dy\bigg)dx $$ 
$$\ \ \ \ \ \ \ \ \ \ \ \ \ \ \ \ \ \ \ \ \ \ \ \ \ \ \ \ \ \ \ \ \ \ \ \ \ \ \ \ \ \ \ \ \ \ \ \ \ \ \ \ \ \ \ \ \ \ \ \ \ \ \ \ \ \ \ \ \ \ \ \ \ \ \ \ \ \ \ \ \ \ \ \ \  \ \ \ \ \ \ \ \ \ \ \ \ \ \ \ \ \ \ \ \ \ \text{for} \ \ s_0\in(0,1)$$
$$\leq C_\alpha\Lambda\int\limits_{B_1}(u-k)^+(x,t)dx.\ \ \ \ \ \ \ \ \ \ \ \ \ \ \ \ \ \ \ \ \ \ \ \ \ \ \ \ \ \ \ \ \ \ \ \ \ \ \ \ \ \ \ \ \ \ \ \ \ \ \ \ \ $$

Similarly, the second term of (\ref{eq1}), 
\begin{equation}\label{eq3}D(-(u-k)^-,\zeta^2(u-k)^+)=\ \ \ \ \ \ \ \ \ \ \ \ \ \ \ \ \ \ \ \ \ \ \ \ \ \ \ \ \ \ \ \ \ \ \ \ \ \ \ \ \ \ \ \ \ \ \ \ \ \ \ \ \ \ \ \ \ \ \ \ \ \ \ \ \ \ \ \ \ \ \ \ \ \ \ \ \ $$
$$\ \ \ \ \ \ \ \  =\int_{\R^n}\int_{\R^n}((u-k)^-(x,t)(\zeta^2(u-k)^+)(y,t)+(u-k)^-(y,t)(\zeta^2(u-k)^+)(x,t))K(x,y,t)dydx$$
$$\ \ \ \ \ \ \ \ \ \ \ \ \ 
-\int_{\R^n}\int_{\R^n}(((u-k)^-\zeta^2(u-k)^+)(x,t) +((u-k)^-\zeta^2(u-k)^+)(y,t))K(x,y,t)dydx;\end{equation}
since the second term in (\ref{eq3}) is zero and using the symmetry of the kernel we have
\begin{equation}\label{eq4a} 
D(-(u-k)^-,\zeta^2(u-k)^+)=2\int_{\R^n}\int_{\R^n}((u-k)^-(x,t)(\zeta^2(u-k)^+)(y,t)K(x,y,t)dydx.
\end{equation}
The first term of (\ref{maineq}) becomes
$$\int_{-1}^0\int_{\R^n}\partial_t \beta(u(x,t))(\zeta^2(u-k)^+)(x,t)dxdt=\int_{-1}^0\int_{\R^n}B((u-k)^+)_t\zeta^2dxdt\ \ \ \ \ \ \ \ \ \ \ \ \ \ \ \ \ \ \ \ \ \ \ \ \ \ $$
\begin{equation}\label{eq4}
\ \ \ \ \ \ \ \ \ \ \ \ \ \ \ \ \ \ \ \ \ \ \ \ \ \ \ \ \ \ \ \ \ \ \ \ \ \ \ \ \ \ =\int_{\R^n}(\zeta^2B((u-k)^+))(x,0)dx-\int_{-1}^0\int_{\R^n}B(u-k)^+)(\zeta^2)_tdxdt 
\end{equation}
where
$$B((u-k)^+):=\int_k^u \beta'(s)(s-k)ds=\int_0^{(u-k)^+}\beta'(k+\tau)\tau d\tau.$$
Again, we want to estimate both terms of (\ref{eq4}) using the properties of $\beta$, i.e.
$$B((u-k)^+) \geq c_1 \int_0^{(u-k)^+}\tau d\tau=\frac{c_1}{2}[(u-k)^+]^2$$
and
$$B((u-k)^+)\leq(u-k)^+\int_0^{(u-k)^+}\beta'(l+\tau)d\tau\leq(\beta(1)-\beta(0))(u-k)^+.$$
Thus,
$$\int_{\R^n}(\zeta^2B((u-k)^+))(x,0)dx\geq\frac{c_1}{2}\int_{\R^n}\bigg((u-k)^+\bigg)^2(x,0)dx$$
and
$$\int_{-1}^0\int_{\R^n}B(u-k)^+)(\zeta^2)_tdxdt\leq (\beta(1)-\beta(0))\int_{-1}^0\int_{\R^n}(u-k)^+(\zeta^2)_tdxdt.$$
We substitute all of the above estimates into (\ref{maineq}). The fact that the upper limit $0$ in the t-integration could have been replaced by any $t\in[-1,0]$ yields our energy inequality i.e.
$$\sup_{-1\leq t\leq 0}\int_{\R^n}(\zeta(u-k)^+)^2(x,t)dx+\int_{-1}^0 \|\zeta(u-k)^+\|^2_{H^{\frac{\alpha}{2}}}(t)dt\leq\ \ \ \ \ \ \ \ \ \ \ \ \ \ \ \ \ \ \ \ \ \ \ \ \ \ \ \ \ \ \ \ \ \ \ \ \  $$
\begin{equation}\label{eq5}
\ \ \ \ \ \ \ \ \ \ \ \ \ \ \ \ \ \ \ \ \ \ \ \ \ \ \ \ \ \ \ \ \ \ \ \ \ \ \ \ \ \ \ \ \ \ \ \ \leq CR^{-\alpha}\int_{-1}^0\int_{B_1}\{((u-k)^+)^2+(u-k)^+\}dxdt
\end{equation} 
where $C$ depends only on $n, \Lambda, \alpha$, and $\beta$.
%By Sobolev embedding theorem for fractional spaces the left-hand sibe of
%(\ref{eq4}) is bounded below by 
%\begin{equation}\label{eq6}C\bigg(\int_{-R^2}^0\int_{\R^n}[\zeta(u-k)]^{2\frac{n+\alpha}{n}}dxdt\bigg)^{\frac{n}{n+\alpha}} 
%\end{equation}

We will obtain now an iterative sequence of inequalities using (\ref{eq5}). 
%and the Sobolev embedding theorem (\ref{eq6}). 
Thus we define for $m=0,1,2,...$
$$k_m=\frac{1}{2}(1-\frac{1}{2^m}) \ \ \ \ \ \ \ \ R_m=\frac{1}{2}(1+\frac{1}{2^m})  \ \ \ \ \ \ \ \ \ \ \ \ Q_m=\{ (x,t): |x|\leq R_m, -R^\alpha_m\leq t\leq 0 \}$$
and choose cutoff functions  $\zeta_m$ such that 
$$ \mathbf{1}_{Q_{m+1}} \leq \zeta_m \leq \mathbf{1}_{Q_m}, \ \ \ \ \ \ \ \ \ \ \ \ \ |\grad\zeta_m|\leq C2^m, \ \ \ \ |\zeta_mt|\leq C 2^{\alpha m}.$$
%We substitute $\zeta=\zeta_m, k=k_m$ into (\ref{eq5}),(\ref{eq6}) and we obtain
%$$\bigg(\int_{-R^2}^0\int_{\R^n}[\zeta(u-k)]^{2\frac{n+\alpha}{n}}dxdt\bigg)^{\frac{n}{n+\alpha}} \leq $$ 
We set $u_m:=(u-k_m)^+$ and
$$I_m:=\sup_{-R^\alpha_m\leq t\leq 0}
\int (\zeta_mu_m)^2dx+\int\norm{\zeta_mu_m}^2_{H^{\frac{\alpha}{2}}}dx.$$
By (\ref{eq5})
\begin{equation}\label{eq6}I_m\leq C4^m\bigg(\int (\zeta_{m-1}u_m)^2dxdt+
\int \zeta_{m-1}u_m dxdt\bigg)
\end{equation} 
Since $u_m<u_{m-1}$ and $\{u_m\neq 0\}=\{u_{m-1}>2^{-m-1}\}$
$$\int\zeta_{m-1}u_mdxdt\leq 
 \frac{1}{2}\int(\zeta_{m-1}u_m)^2 dxdt+
\frac{1}{2}|\{u_m\neq 0\}\cap{Q_{m-1}}|$$
$$\leq\frac{1}{2}\int(\zeta_{m-1}u_m)^2 dxdt+\frac{4^{m+1}}{2}\int(\zeta_{m-1}u_{m-1})^2dxdt
\leq\frac{1}{2}(1+4^{m+1})\int(\zeta_{m-1}u_{m-1})^2dxdt.$$
Therefore
$$I_m\leq C4^m\bigg(\int(\zeta_{m-1}u_{m-1})^{2\frac{n+\alpha}{n}}dxdt\bigg)^\frac{n}{n+\alpha}\big|\{u_{m-1}\neq 0 \}\cap {Q_{m-1}}\big|^\frac{\alpha}{n+\alpha}$$
$$\leq C4^{m(1+\frac{\alpha}{n})}\bigg(\int(\zeta_{m-2}u_{m-2})^{2\frac{n+\alpha}{n}}dxdt\bigg).
%^\frac{n}{n+\alpha}\bigg(4^m\int (\zeta_{m-2} %u_{m-2})^2dxdt\bigg)^{\frac{\alpha}{n+\alpha}}
$$
Hence, by Sobolev inequality, 
\begin{equation}\label{eq7}
I_m\leq C 4^{2m}I_{m-2}^{1+\frac{\alpha}{n}}
\end{equation}
and, consequently, $I_m\rightarrow 0$ as $n\rightarrow \infty$ provided that
$$I_0\leq 4^{-\frac{4n^2}{\alpha^2}} C^{-\frac{n}{\alpha}}.$$
\end{proof}

%\begin{lemma}\label{lemma 4.2}
%Given an $\epsilon>0$ there exists a $\delta>0$ such that for every subsolution $u^{\epsilon}$ of (\ref{eq0})
%%$$0<u^\epsilon <1  \  \  \  \ \ \ \text{in}  \ \ Q_1$$
%$$|\{(x,t)\in Q_1:u^\epsilon=0\}|\geq \frac{1}{2} |Q_1|$$ if
%$$\bigg|\bigg\{(x,t)\in Q_1:0<u^\epsilon<\frac{1}{2}\bigg\}\bigg|<\delta$$
%$then
%$$\int_{-\frac{1}{2^\alpha}}^0 \big[(u-\frac{1}{2})^+\big]^2dxdt<\epsilon$$

%\end{lemma}

%\begin{proof}With no loss of generality, we may assume that $\beta'_\epsilon=1$. Then the conclusion follows by applying appropriately Lemma 4.1 of \cite{CCV}.
%Taking $\zeta$ as in Lemma \ref{lemma 4.1} (with $R=1$), then in the differentiated form (\ref{maineq}) reads:
%\begin{equation}\label{eq9}\int_{\R^n}\partial_t u(x,t)(\zeta^2(u-k)^+)(x,t)dx+\frac{1}{2} D(u,\zeta^2(u-k)^+)=0 \ \ \ \ \forall t\in (-1,0]
%\end{equation}
%where
%$$D(u,v):=\int_{R^n}\int_{R^n}(u(x,t)-u(y,t))K(x,y,t)((v(x,t)-v(y,t))dydx$$.
%Working as in Lemma \ref{lemma 4.1}we end up with
%\begin{equation}\label{eq10}
%\partial_t\int_{\R^n} (\zeta(u-k)^+)^2dx+D(\zeta(u-k)^+,\zeta(u-k)^+)+
%2\int_{\R^n}\int_{\R^n}(((u-k)^-(y,t)(\zeta^2(u-k)^+)(x,t) K(x,y,t)dydx=
%\end{equation}
%$$2\int_{\R^n}[(u-k)^+]^2\zeta\zeta_tdx+\int_{\R^n}\int_{\R^n}(\zeta(x,t)-\zeta(y,t))^2(u-k)^+(x,t)(u-k)^+(y,t)K(x,y,t)dydx$$.

%Using the nonnegativity of the second and thitd term on the left of (\ref{eq10}) and estimating, as above, the last term on the right we obtain 
%\end{proof}
\section{Oscillation decay}\label{osc-sect}
In order to complete the oscillation decay we have to consider the other alternative i.e. when $u^\epsilon$, on average, is far from the zero of $\beta_\epsilon$. Below, Lemma \ref{altl} handles the second more delicate alternative situation to Lemma \ref{mlemma}. It relies on a parabolic version of DeGiorgi's isoperimetric lemma (Lemma \ref{isolemma}) adjusted to our situation:
\begin{lemma}\label{isolemma}
Given $\sigma>0$ there exists a $\delta>0$ and a $\lambda\in (0,1)$ such that, for a bounded $v$ satisfying  (\ref{eq0}) with $\beta'(v)=1$, $v\geq 0$ in $Q_1$,
and
$$|\{(x,t)\in Q_1: v\geq 1\}|\geq c_0\sigma|Q_1|,\ \ \ \ \text{for some}\ \ c_0<1$$
if
$$\big|\{(x,t)\in Q_1: \l<v<1\}\big|<\delta|Q_1|$$
then
$$\fint_{Q_{1/2}}\bigg(\bigg((1-\frac{v}{\lambda}\bigg)^+\bigg)^2dxdt<\sigma.$$

\end{lemma}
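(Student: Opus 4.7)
Since $\beta'(v)=1$, $v$ solves the linear nonlocal heat equation $\partial_t v=Lv$. Because $((1-v/\lambda)^+)^2=\lambda^{-2}((\lambda-v)^+)^2\leq\mathbf{1}_{\{v<\lambda\}}$, it suffices to prove that $|\{v<\lambda\}\cap Q_{1/2}|<\sigma|Q_{1/2}|$. The plan is a classical De Giorgi argument: first derive a Caccioppoli inequality for the below-truncation $(\lambda-v)^+$, then run a nonlocal isoperimetric comparison using the smallness of the intermediate set $\{\lambda<v<1\}$ together with the positive measure of the ``good'' set $\{v\geq 1\}$.

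\textbf{Step 1 (Caccioppoli for the below-truncation).} Testing the weak form of the linear equation with $-\zeta^2(\lambda-v)^+$, where $\zeta$ is a cutoff supported in $Q_{3/4}$ and equal to $1$ on $Q_{1/2}$, and repeating the symmetric decompositions from the proof of Lemma \ref{mlemma} with the direction of the truncation reversed, we obtain an energy inequality of the form
\begin{equation*}
\sup_{-1\leq t\leq 0}\int \zeta^2((\lambda-v)^+)^2\,dx + \int_{-1}^0\norm{\zeta(\lambda-v)^+}^2_{H^{\alpha/2}}\,dt \;\leq\; C(\lambda,n,\alpha,\Lambda),
\end{equation*}
which follows from the pointwise bound $0\leq(\lambda-v)^+\leq\lambda$ on $Q_{3/4}$.

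\textbf{Step 2 (nonlocal isoperimetric argument).} Introduce the auxiliary function
$$h\;:=\;\min\!\left(\frac{(1-v)^+}{1-\lambda},\ 1\right),$$
which satisfies $h=1$ on $\{v\leq\lambda\}$, $h=0$ on $\{v\geq 1\}$, and $\{0<h<1\}=\{\lambda<v<1\}$. Writing $h=1-g/(1-\lambda)$ with $g=\min((v-\lambda)^+,1-\lambda)$, a symmetric energy estimate applied to $(v-\lambda)^+$ together with the Lipschitz chain rule controls $\int_{-1/2}^0\norm{h(\cdot,t)}^2_{H^{\alpha/2}(B_{3/4})}\,dt$ uniformly in terms of $\lambda$. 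Applied slicewise, a fractional De Giorgi isoperimetric inequality in the spirit of \cite{CCV} then yields
$$|\{h(\cdot,t)=0\}\cap B_{1/2}|\cdot|\{h(\cdot,t)=1\}\cap B_{1/2}|\;\leq\;C\,|\{0<h(\cdot,t)<1\}\cap B_{3/4}|^{\theta}\,\norm{h(\cdot,t)}^{1-\theta}_{H^{\alpha/2}(B_{3/4})}$$
for some exponent $\theta=\theta(n,\alpha)\in(0,1)$. A Chebyshev argument passes the hypothesis $|\{v\geq 1\}\cap Q_1|\geq c_0\sigma|Q_1|$ to a pointwise-in-time lower bound $|\{h(\cdot,t)=0\}\cap B_{3/4}|\gtrsim c_0\sigma$ on a positive-measure set of slices $t\in(-1/2,0)$; integrating the sliced isoperimetric inequality over those slices and invoking $|\{0<h<1\}|<\delta|Q_1|$ forces $|\{h=1\}\cap Q_{1/2}|<\sigma|Q_{1/2}|$ once $\delta$ is chosen sufficiently small in terms of $\sigma$, $\lambda$, and $c_0$, which is the desired measure bound.

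\textbf{Main obstacle.} The delicate point is the balancing of the three small parameters. The level $\lambda$ must be fixed first, small enough that the transition between $\{v\leq\lambda\}$ and $\{v\geq 1\}$ is forced by the nonlocal operator to be sharp, yet large enough that $(1-\lambda)^{-1}$ and the various prefactors in the isoperimetric inequality remain quantitatively bounded. Only after $\lambda$ is fixed can $\delta$ be picked small enough to absorb all constants. A secondary bookkeeping difficulty is transferring the measure information from the whole cylinder $Q_1$ to the slices actually appearing in $Q_{1/2}$; this is standard for parabolic cutoffs but requires care in matching the time and space scales of the localization.
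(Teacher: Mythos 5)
Your Step 2 hinges on a slicewise ``fractional De Giorgi isoperimetric inequality'' bounding $|\{h(\cdot,t)=0\}|\cdot|\{h(\cdot,t)=1\}|$ by a positive power of $|\{0<h(\cdot,t)<1\}|$ times a power of $\norm{h(\cdot,t)}_{H^{\alpha/2}}$. No such inequality exists in the range $\alpha<1$ (and the lemma must cover all $\alpha\in(0,2)$): the characteristic function of a bounded smooth set lies in $H^{\alpha/2}$ whenever $\alpha/2<1/2$, so taking $h=\mathbf{1}_E$ makes your right-hand side vanish (the intermediate set is empty) while the left-hand side is positive. This is exactly why the intermediate-value lemma in the nonlocal setting cannot be extracted from the Caccioppoli estimate of your Step 1 alone; in \cite{CCV}, and in the paper's Appendix, the lemma is proved by using the equation itself. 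Concretely, the paper argues by contradiction and uses two dynamical ingredients your sketch never invokes: (i) the long-range ``good term'' $\iint(\l'-v)^-(x,t)\,\zeta^2(\l'-v)^+(y,t)K(x,y,t)\,dy\,dx$ obtained by testing with $\zeta^2(\l'-v)^+$ (with $\l'=2\l$), which, through the kernel lower bound for $|x-y|\le 2$, couples the far set $\{v\ge 1\}$ to the sublevel set and forces the energy $E(t)=\int(\xi(\l'-v)^+)^2dx$ to be small on most slices where the far set is large; and (ii) the bound $\frac{d}{dt}E(t)\le C(\l')^2$, which says $E$ grows slowly, so between a slice where $E$ is small and the slice $t_0$ where, under the contradiction hypothesis, $E$ is large, there is a time interval $J$ of definite length on which $E$ takes intermediate values; on most of $J$ the spatial set $\{\l\le v\le 1\}$ is shown to be large, contradicting $|\{\l<v<1\}|<\d|Q_1|$ once $\d$ is small.

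A second, independent gap is your Chebyshev step. The hypothesis $|\{v\ge1\}\cap Q_1|\ge c_0\s|Q_1|$ only yields slices with $|\{v(\cdot,t)\ge1\}\cap B_1|$ large for $t$ somewhere in $(-1,0)$; those times need not lie in $(-1/2,0)$ and, more importantly, need not coincide with the times at which $\{v<\l\}$ is large. A slicewise inequality evaluated at a fixed $t$ gives no information at times where the good set is thin, so even granting the isoperimetric inequality, integrating over the good slices does not control $|\{v<\l\}\cap Q_{1/2}|$. Transferring information from good times to bad times is precisely what the time-propagation mechanism (the slow growth of $E$, together with the nonlocal good term) accomplishes in the paper's proof; without it, or some other genuine use of the equation across time slices, your argument does not close. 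The Step 1 energy inequality is correct, but it is only the starting point and carries none of the intermediate-value information.
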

 
We give its proof in the Appendix i.e. \S \ref{App}. 

\begin{lemma}\label{altl}
Let $Q_1$ and $\sigma$ be as in Lemma \ref{mlemma} and 
$$0<u^\epsilon <1  \  \  \  \ \ \ \text{in}  \ \ Q_1$$
a solution of (\ref{eq0})
with $\beta'_{\e}(s)< C$, for $s<1/4$ and $C$ is independent of $\epsilon$. Suppose that
\begin{equation}\label{eq8}
\fint_{Q_1}(u^\epsilon)^2dxdt\geq\sigma,
\end{equation} 
then there exists a small constant $\bar{\s}>0$ depending on $\s$ such that 
$$u^\epsilon\geq \bar{\sigma} \ \ \  \ \ \   \ \text{in} \ \ Q_{1/4}.$$
\end{lemma}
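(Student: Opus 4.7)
The plan is to combine the isoperimetric Lemma \ref{isolemma} with a dyadic pigeonhole argument in measure and a final application of Lemma \ref{mlemma} to a reflected function. The starting point is elementary: from $\fint_{Q_1}(u^\epsilon)^2\,dxdt \geq \sigma$ and $0\leq u^\epsilon\leq 1$ one has $\int_{Q_1}u^\epsilon \geq \sigma|Q_1|$, and splitting this integral over $\{u^\epsilon\geq \sigma/2\}$ and its complement yields
\[
|\{(x,t)\in Q_1 : u^\epsilon(x,t) \geq \sigma/2\}| \geq \tfrac{\sigma}{2}|Q_1|.
\]

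Apply Lemma \ref{isolemma} with the small parameter $\sigma' := \sigma^2/4$ (chosen so that an $L^2$-bound by $\sigma'$ will translate, via Cauchy--Schwarz, to an $L^1$-bound below the threshold $\sigma$ of Lemma \ref{mlemma}), obtaining associated constants $\lambda\in(0,1)$ and $\delta>0$. Introduce dyadic levels $k_j:=(\sigma/2)\lambda^j$ and pairwise disjoint transition strips $S_j:=\{k_{j+1}\leq u^\epsilon < k_j\}\cap Q_1$. Since $\sum_{j=0}^{N}|S_j|\leq |Q_1|$, for $N:=\lceil 1/\delta\rceil+1$ there must exist $j^\ast\in\{0,\dots,N\}$ with $|S_{j^\ast}|<\delta|Q_1|$. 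At this level, rescale by $v:=u^\epsilon/k_{j^\ast}$; because $k_{j^\ast}\leq \sigma/2 < 1/4$, the hypothesis $\beta'_\epsilon(s)<C$ for $s<1/4$ keeps $v$ in the nondegenerate regime wherever $v\leq 1$, and after absorbing the bounded factor $\beta'_\epsilon$ into the kernel, $v$ satisfies the equation required by Lemma \ref{isolemma}. The measure hypotheses of the isoperimetric lemma hold: $|\{v\geq 1\}|\geq c_0\sigma'|Q_1|$ follows from the first-paragraph estimate since $k_{j^\ast}\leq \sigma/2$, and $|\{\lambda<v<1\}|=|S_{j^\ast}|<\delta|Q_1|$ is the choice of $j^\ast$. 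Lemma \ref{isolemma} then delivers
\[
\fint_{Q_{1/2}}\Big(\big(1-v/\lambda\big)^+\Big)^2 dxdt < \sigma'.
\]

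Now set $w:=(1-v/\lambda)^+=(\lambda k_{j^\ast}-u^\epsilon)^+/(\lambda k_{j^\ast})$, so that $\fint_{Q_{1/2}} w^2\,dxdt<\sigma'$ and, by Cauchy--Schwarz, $\fint_{Q_{1/2}} w\,dxdt<\sqrt{\sigma'}=\sigma/2<\sigma$. The support of $w$ lies in $\{u^\epsilon<\lambda k_{j^\ast}\}\subset\{u^\epsilon<1/4\}$, where the Lipschitz control $\beta'_\epsilon<C$ is in force; moreover, the affine reflection $u^\epsilon\mapsto \lambda k_{j^\ast}-u^\epsilon$ preserves the symmetric bilinear form $D$ and only adds nonnegative contributions from $\{u^\epsilon\geq\lambda k_{j^\ast}\}$, so $w$ inherits the energy inequality (\ref{eq5}) that drove Lemma \ref{mlemma}. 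Running the De Giorgi iteration of Lemma \ref{mlemma} on $w$ at scale $Q_{1/2}$ then yields $w<1/2$ in $Q_{1/4}$, which is $u^\epsilon\geq \lambda k_{j^\ast}/2 \geq (\sigma/4)\lambda^{N+1}=:\bar\sigma$ in $Q_{1/4}$.

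The principal obstacle is the reduction in the last two paragraphs: verifying, with constants independent of $\epsilon$, that the rescaled $v$ fits the $\beta'(v)=1$ framework of Lemma \ref{isolemma} and that the reflected $w$ truly satisfies the energy inequality underlying Lemma \ref{mlemma}. Both steps rest on $\beta'_\epsilon<C$ on $\{u^\epsilon<1/4\}$, which confines us to a linear-like regime where the multiplier $\beta'_\epsilon$ can be absorbed into the kernel without altering comparability constants; the nonlocal tail contributions coming from outside this set are harmless, being controlled by $\|u^\epsilon\|_{L^\infty}\leq 1$ precisely as was done in the bound of the integral over $\{|x-y|>1/2\}$ in the proof of Lemma \ref{mlemma}.
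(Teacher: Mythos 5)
Your argument is essentially the paper's: the same dyadic levels $\lambda^{k}\sigma/2$, the same combination of Lemma \ref{isolemma} with Lemma \ref{mlemma} applied to the reflected function at scale $Q_{1/2}$, and the same final bound $\bar\sigma\sim\lambda^{1/\delta}\,\sigma/4$; your one-shot pigeonhole on the disjoint transition strips is just a compressed form of the paper's iteration, which decrements $|\{v_k\le\lambda\}\cap Q_1|$ by $\delta|Q_1|$ at each step until either the sublevel set is exhausted or a small strip appears. The only loose point is the phrase ``absorbing $\beta'_\epsilon$ into the kernel'': a variable factor multiplying $\partial_t u$ cannot be moved into the symmetric kernel $K$, and Lemma \ref{isolemma} is stated for $\beta'\equiv 1$; the paper instead uses that on the range of values involved ($u^\epsilon<1/4$) the approximations $\beta_\epsilon$ have constant slope $a$, which is then normalized to $1$.
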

\begin{proof}
By hypothesis, it follows that 
\begin{equation}\label{eq9}
\bigg|\bigg\{u^\epsilon>\frac{\sigma}{2}\bigg\}\cap Q_1\bigg|\geq c_0\sigma|Q_1|
\end{equation} 
for some $c_0<1$ (we drop  the "$\epsilon$" again). Therefore, with $\zeta\in C_0^1(Q_1)$, we set in (\ref{eq0}) $\eta=\zeta^2(1-\frac{2}{\sigma}u)^+$. 
%for some $\lambda$ small, such that $0<\frac{\sigma}{2}\lambda<\frac{1}{4}$.  
Notice in this range  of values  $\beta'=a$ and with no loss of generality we set $a=1$. So 

\begin{equation}\label{eq10}\int_{-\infty}^0\bigg[\int_{\R^n} \partial_t u(\zeta^2(1-\frac{2}{\sigma}u)^+)dx+\frac{1}{2} D(u,\zeta^2(1-\frac{2}{\sigma}u)^+)\bigg]dt=0.
\end{equation}
Also, (\ref{eq10}) is satisfied if $\eta:=\zeta^2(1-\frac{2}{\l^k\s}u)^+$ for any $\l\in (0,1)$ and $k=0,1,2,...$. Since $\{u>\l^k\frac{\s}{2}\}\supset\{u>\frac{\s}{2}\}$, it follows that, for every $k=0,1,2,...$,  $v_k:=\frac{2}{\l^k\s}u$ satisfies the hypotheses of the above Lemma \ref{isolemma}. 

Let $\d>0$ and $\l\in(0,1)$ be as in Lemma \ref{isolemma}, we will show that, in a finite nunber of steps $k_0:=k_0(\d)$, 
$$ |\{v_{k_0} \leq \l \} \cap Q_1|=0. $$
Indeed, if for $ k=0,1,2,..,k_0 \ \  |\{\l<v_k<1\}\cap Q_1|\geq\d |Q_1|$ then
$$|\{v_k \leq \l\}\cap Q_1|=|\{v_k\leq 1\}\cap Q_1|-|\{\l<v_k\leq 1\}\cap Q_1|$$
$$\ \ \ \ \ \leq |\{v_k\leq 1\}\cap Q_1|-\d |Q_1|$$  
$$\ \ \ \ \ =|\{v_{k-1}\leq\l\}\cap Q_1|-\d | Q_1|$$
$$\ \ \ \ \leq |\{v_0\leq\l\}\cap Q_1|-k\d|Q_1|$$  
$$\ \ \ \ \ \ \ \ \ \ \ \ \ \ \ \ \ \leq 0\ \ \ \ \ \ \ \ \ \ \ \ \ \ \ \ \ \\ \ \\ \ \ \ \ \ \ \ \ \ \ \ \ \ \ \text{if}\ \  k\geq\frac{1}{\d}.$$
Hence, for $k_0:=\frac{1}{\d}$,
$$v_{k_0}> \l$$
that is
$$u> \l^{k_0+1}\frac{\s}{2}.$$

Suppose now that there exists $k^*$, \ $0\leq k^*\leq k_0$, such that
$$\big|\{\l<v_{k^*}\leq 1\}\cap Q_1\big|<\d. $$
By Lemma \ref{isolemma} applied to $v_{k^*}$ and, observing that $w:=(1-v_{k^*+1})$ is a weak solution to (\ref{eq0}) with $\beta'=1$, we can still apply  Lemma \ref{mlemma} to $w=:(1-v_{k^*+1})$  in $Q_{1/2}$ to conclude that 
$$w<\frac{1}{2}\ \ \ \ \ \ \ \text{in}\ \ \ Q_{1/4}$$
or
$$v_{k^*+1}>\frac{1}{2}\ \ \ \ \ \ \ \text{in}\ \ \ Q_{1/4}$$
i.e
$$u>\l^{k^*+1}\frac{\s}{4}\ \ \ \ \ \ \ \text{in}\ \ \ Q_{1/4}.$$
We complete the proof by observing that in both cases we have
$$u>\l^{{k_0}+1}\frac{\s}{4}\ \ \ \ \ \ \ \text{in}\ \ \ Q_{1/4}.$$

\end{proof}

We conclude the section by proving the full oscillation decay of our approximate normalized solution.

\begin{thm}\label{osc}
Let $u^{\e}$ be a solution to (\ref{eq0}) with 
$$\ \ \ \ \ \ \ \ \ \ \ 0<u^\e<1\ \ \ \ \ \ \ \text{in}\\ \ \ Q_1. $$
Then there exists $\bar{\s} $, $0<\bar{\s}<1$,  independent of $\e$, such that
$$\osc_{Q_{1/4}} u^\e\leq 1-\bar{\s}.$$
\end{thm}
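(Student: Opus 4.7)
My plan is a classical De Giorgi dichotomy that marries the two cornerstone lemmas of this section: Lemma \ref{mlemma} supplies decay from above when $u^\e$ is small in mean, and Lemma \ref{altl} supplies positivity from below when $u^\e$ carries nontrivial mass. Writing $\sigma > 0$ for the universal constant produced by Lemma \ref{mlemma}, I split according to the $L^1$ average of $u^\e$ on $Q_1$.

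If $\fint_{Q_1} u^\e\, dx\, dt < \sigma$, Lemma \ref{mlemma} applies directly and yields $u^\e < 1/2$ in $Q_{1/2} \supset Q_{1/4}$; combined with $u^\e > 0$ this forces $\osc_{Q_{1/4}} u^\e < 1/2$. In the opposite alternative $\fint_{Q_1} u^\e\, dx\, dt \geq \sigma$, Jensen's inequality $(\fint u^\e)^2 \leq \fint (u^\e)^2$ upgrades the bound to $\fint_{Q_1}(u^\e)^2 \, dx\, dt \geq \sigma^2$, so Lemma \ref{altl} applies with $\sigma^2$ in place of $\sigma$. The threshold there is used only as an abstract small positive constant, so this substitution is harmless and merely shrinks the resulting $\bar{\s}$. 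The conclusion $u^\e \geq \bar{\s}$ in $Q_{1/4}$ gives $\osc_{Q_{1/4}} u^\e \leq 1 - \bar{\s}$, and taking the smaller of $1/2$ and $\bar{\s}$ as the final constant closes both branches.

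The delicate point I expect is the asymmetric hypothesis $\beta_\e'(s) < C$ for $s < 1/4$ required by Lemma \ref{altl}, which after normalization is available only when the singular point of $\beta_\e$ sits in $[1/4, 1]$. In the reflected configuration one would run the same dichotomy for $w^\e := 1 - u^\e$; this solves an equation of the same type with the transformed graph $\tilde{\beta}_\e(s) := -\beta_\e(1-s)$ whose singularity now lies in $[0, 3/4]$, so the pertinent derivative bound again holds below the working level and the dichotomy concludes exactly as before.
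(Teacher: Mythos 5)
Your proposal is correct and takes essentially the same route as the paper: the paper likewise combines the Lemma \ref{mlemma}/Lemma \ref{altl} dichotomy with a reflection $u^\e\mapsto 1-u^\e$ chosen according to the location of the singularity of $\beta_\e$ (splitting at $1/2$ rather than $1/4$, which avoids the borderline case where the uniform bound on $\beta_\e'$ below the level $1/4$ could fail for a singularity sitting exactly at $1/4$). Note only the small slip that the reflected graph $\tilde{\beta}_\e(s)=-\beta_\e(1-s)$ has its singularity in $(3/4,1]$, not $[0,3/4]$; this is precisely what makes the derivative bound available in that branch, as you correctly conclude.
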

\begin{proof}
Suppose first that the singularity  of $\beta_\e$ is greater or equal to one-half i.e. $\beta_\e(x_0)=0$ when $x_0\geq\frac{1}{2}$. Then if $u^\e$ is, in measure, close of order $\s$ to its minimum (i.e. zero) then by Lemma \ref{mlemma}
$$\osc_{Q_{1/2}}u^\e\leq\frac{1}{2};$$ 
if it is far from it, then, since $\beta_\e'(x)<C$, for $x\leq\frac{1}{4}$, where $C$ independent of sufficiently small $\e>0$, Lemma \ref{altl} can be applied to have
$$\osc_{Q_{1/4}} u^\e\leq 1-\bar{\s}$$
where $\bar{\s}:=\l^{k_0+1}\frac{\s}{2}$.

On the other hand, if the singularity of $\beta_\e$ is less than one-half then we argue in a similar fashion, i.e. if $u^\e$ is, in measure, close of order $\s$ to its maximum (i.e. one) then by Lemma \ref{mlemma} applied to $1-u^\e$ we obtain the same bound for the oscillation as above; on the other hand, if it is far from its maximum, we apply Lemma \ref{altl} to $1-u^\e$ again.

\end{proof}

\section{Continuity}\label{cont}
In this section we are ready to prove our main result. This is achieved by iterating the Theorem \ref{osc} of the previous section. The iteration will be carried out in a dyadic sequence of shrinking cylinders which will force the oscillations to diminish. Since the estimates will deteriorate and $\beta'_\epsilon$ goes to infinity, our modulus of continuity will not be H\"older, except in case (ii) where we have an extrarescaling invariance.

\begin{prop}\label{prop1}
Let $u^\epsilon$ be a solution to problem (\ref{eq0}) in $Q_R(x_0,t_0)\subset Q$ with $R\leq 1$  Suppose that 
$$\beta_\epsilon \big(\sup_{Q_R} u^\epsilon \big)
-\beta_\epsilon \big(\inf_{Q_R} u^\epsilon \big) 
\leq K \ \ \ \ \text{and} \ \ \ \ \inf_{Q_R} 
\beta'_\epsilon\geq c_1>0 $$
where $K$ and $c_1$ are independent of $\epsilon$. Then
$$|u^\epsilon(x,t)-u^\epsilon(x_0,t_0)|\leq \omega(x-x_0,t-t_0)$$
where $\omega$ is a modulus of continuity (i.e. a monotone function with $\omega(0)=0$ depending only on $K$ and $c_1$.
\end{prop}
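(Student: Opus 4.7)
The plan is to iterate Theorem~\ref{osc} on a nested dyadic sequence of parabolic cylinders $Q_{R_k}(x_0,t_0)$ with $R_k:=R/4^k$, so that a fractional oscillation decay at each step produces a modulus of continuity at $(x_0,t_0)$. Without loss of generality assume $(x_0,t_0)=(0,0)$. At each scale I intend to normalize the solution affinely so that Theorem~\ref{osc} is directly applicable.

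The rescaling is as follows. Set $m_k:=\inf_{Q_{R_k}} u^\epsilon$ and $\omega_k:=\osc_{Q_{R_k}} u^\epsilon$, and define
\[
\tilde u^{(k)}(x,t) := \frac{u^\epsilon(R_k x,\, R_k^\alpha t) - m_k}{\omega_k},\qquad (x,t)\in Q_1.
\]
Since $R_k\le R\le 1$, the parabolic change of variables preserves the kernel class on the scales that matter, and since $L$ annihilates constants, a direct computation shows that $\tilde u^{(k)}$ solves
\[
\partial_t \hat\beta_k\bigl(\tilde u^{(k)}\bigr) = \tilde L\, \tilde u^{(k)} \qquad\text{in }Q_1, \qquad \hat\beta_k(s):=\frac{\beta_\epsilon(m_k+\omega_k s)-\beta_\epsilon(m_k)}{\omega_k},
\]
with $\tilde L$ in the same class as $L$. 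The chain rule gives $\hat\beta_k'(s)=\beta_\epsilon'(m_k+\omega_k s)\ge c_1$, so the lower derivative bound is preserved across scales, while the hypothesis of the proposition translates into the total-variation bound $\hat\beta_k(1)-\hat\beta_k(0)\le K/\omega_k$. One still has to verify the upper bound on $\hat\beta_k'$ away from its singular point required by Lemma~\ref{altl}, splitting into the two cases of Theorem~\ref{osc} according to whether the singular point of $\hat\beta_k$ lies in the upper or lower half of $[0,1]$, so that the alternative is always applied on the linear side of the graph.

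Granted these checks, Theorem~\ref{osc} applied to $\tilde u^{(k)}$ yields a constant $\bar\sigma_k>0$ with $\osc_{Q_{1/4}}\tilde u^{(k)}\le 1-\bar\sigma_k$, equivalently $\omega_{k+1}\le (1-\bar\sigma_k)\omega_k$. Iterating this bound one obtains $\omega_k\le \omega_0\prod_{j=0}^{k-1}(1-\bar\sigma_j)$, and setting $\omega(r):=\omega_k$ for $\max\{|x-x_0|,|t-t_0|^{1/\alpha}\}=r\in (R_{k+1},R_k]$ defines a monotone function with $\omega(0)=0$ depending only on $K$, $c_1$, and the universal parameters $n,\alpha,\Lambda$.

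The main obstacle is tracking how $\bar\sigma_k$ depends on $\hat\beta_k$. The decay constant produced by Theorem~\ref{osc} depends, through Lemma~\ref{altl}, on an upper bound for $\hat\beta_k'$ on the side of $\{0,1\}$ opposite to its singular point, and this bound deteriorates as the rescaled singular spike of $\hat\beta_k$ sharpens (total variation grows like $K/\omega_k$, with $\hat\beta_k'$ concentrating on narrower sets). The heart of the argument is to quantify this deterioration and show that $\sum_k \bar\sigma_k=+\infty$, so that the infinite product vanishes and $\omega_k\to 0$; this produces a continuous but in general non-H\"older modulus. In case (ii) with porous-media homogeneity $\beta(u)\sim u^{1/m}$ the extra rescaling invariance referenced at the start of \S\ref{cont} keeps the $\bar\sigma_k$'s bounded below by a $k$-independent constant, upgrading the modulus to genuinely H\"older.
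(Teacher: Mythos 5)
Your setup coincides with the paper's own proof: the same dyadic rescaling $u_k(x,t)=u(x_0+R x/2^k,\,t_0+R^\alpha t/2^{\alpha k})$ normalized by $m_k$ and $\omega_k=\osc_{Q_k}u$, the same rescaled nonlinearity $\hat\beta_k$ with $\hat\beta_k'\ge c_1$ and $\hat\beta_k(1)-\hat\beta_k(0)\le K/\omega_k$, the same check that the rescaled kernel stays in the admissible class (indeed with a wider indicator cutoff), and the same application of Theorem~\ref{osc} at each scale followed by $\omega(r):=\omega_k$ on dyadic annuli. But what you label ``the main obstacle'' and ``the heart of the argument'' is precisely the step the proof must contain, and you do not supply it. The paper closes it by asserting a quantitative form of Theorem~\ref{osc}: the decay constant depends on the nonlinearity only through the ratio of its derivative lower bound to its total variation on $[0,1]$, namely
\begin{equation*}
\bar\sigma=C\Bigl(\frac{\inf \tilde\beta'}{\tilde\beta(1)-\tilde\beta(0)}\Bigr)^{N_0},
\end{equation*}
with $C,N_0$ universal (this is what the constants of Lemma~\ref{mlemma} and Lemma~\ref{altl} actually depend on, through the energy inequality where $\beta(1)-\beta(0)$ enters and through $c_1$). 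With this, $\bar\sigma_k\ge C\bigl(c_1\omega_k/K\bigr)^{N_0}$, i.e. $\omega_{k+1}\le\bigl(1-\bar\sigma(c_1\omega_k/K)\bigr)\omega_k$, and the conclusion follows. Without some explicit lower bound of this type your iteration proves nothing: a priori $\bar\sigma_k$ could degenerate so fast that $\prod_j(1-\bar\sigma_j)$ stays bounded away from zero.

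A second, related issue is your criterion ``show $\sum_k\bar\sigma_k=+\infty$''. As stated it is circular, because $\bar\sigma_k$ is a function of $\omega_k$, which is what you are trying to send to zero; you cannot verify divergence of the series independently of the conclusion. The correct logic (implicit in the paper's remark that $\mu_k\to1$ only when $\osc_{Q_k}u\to0$) is a dichotomy: if $\omega_k\ge\delta>0$ for all $k$, then $\bar\sigma_k\ge\bar\sigma(c_1\delta/K)>0$ uniformly, so $\omega_k$ decays geometrically, contradicting $\omega_k\ge\delta$; hence $\omega_k\to0$, and inserting the power-law dependence $\bar\sigma\sim(c_1\omega_k/K)^{N_0}$ into the recursion gives the quantitative (logarithmic-type, non-H\"older) modulus, while in case (ii) the scaling invariance keeps the ratio, and hence $\bar\sigma_k$, bounded below, giving H\"older continuity as in Proposition~\ref{prop 4.6}. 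So the architecture of your argument is right and matches the paper, but the proposal as written defers exactly the quantitative dependence of $\bar\sigma$ on $\hat\beta_k$ that makes the iteration converge, and replaces the closing argument by a condition you cannot check directly.
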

\begin{proof}
We ignore "$\epsilon$" again. Set 
$Q_k(x_0,t_0):=Q_{R/{2^k}}(x_0,t_0)$ and $m_k:=\inf_{Q_k} u$, $M_k:=\sup_{Q_k} u$. Define
$$v:=\frac{u_k-m_k}{M_k-m_k}$$
where $u_k (x,t):=u(x_0+R\frac{x}{2^k},t_0+R^\a\frac{t}{2^{\alpha k}})$. Then $v$ verifies
$$\tilde{\beta'}(v)v_t=Lv \ \ \ \text{in} \ \ \ Q_1 $$
where $\tilde{\beta}(v):=\frac{1}{M_k-m_k}\beta((M_k-m_k)v+m_k)$. The function $v$ still is a solution to the equation of (\ref{eq0}) but with a different kernel. Actually this new kernel $\bar{K}(x,y,t)$ satisfies the same conditions imposed upon $K(x,y,t)$ in a sronger sense.  Indeed, simce $\Bar{K}(x,y,t):=\big(\frac{R}{2^k}\big)^{n+\a}K(x_0+\frac{R}{2^k}x,x_o+\frac{R}{2^k}y, t_0+\big(\frac{R}{2^k}\big)^\a t)$ we have 
$$\frac{{\bf{1}}_{\{|x-y|\leq (2^{k+1}/R)\}} }{\L}\frac{1}{|x-y|^{n+\a}}\leq\bar{K}(x,y,t)\leq\frac{\L}{|x-y|^{n+\a}} $$   
Hence we can apply Theorem \ref{osc} to $v$ to obtain
$$\osc_{Q_{1/4}} v\leq (1-\bar{\sigma})$$
where the direct dependence of the constant $\bar{\s}$ to the nonlinearity of $\beta$ is given by 
$$\bar{\sigma}=\bar{\s}\bigg(\frac{\inf_{Q_R} \tilde{\beta}'(v)}{\tilde{\beta}(1)-\tilde{\beta}(0)}\bigg):=C\bigg(\frac{\inf_{Q_R} \tilde{\beta}'(v)}{\tilde{\beta}(1)-\tilde{\beta}(0)}\bigg)^{N_0}, $$where $C$ and $N_0$ are universal i.e. they are independent of $\e$ and depend only on $\L$, $\a$, the dimension $n$ and the $L^\infty$ bounds of the solution. 
Hence, in the original setting,
$$\osc_{Q_{k+1}(x_0,t_0)} u\leq\mu_k \osc_{Q_R(x_0,t_0)} u$$
with  $\mu_k:=1-\bar{\s}(\frac{c_1}{K}\osc_{Q_k(x_0,t_0)} u)$. Therefore $\mu_k \rightarrow 1$ as $k\rightarrow \infty$ only when $\osc_{Q_k} u\rightarrow 0$ which yields a logarithmic raised to small power modulus of continuity.
\end{proof}
The additional assumption that we will impose on $\beta$ of case (ii) which includes the porous media situation i.e. $\beta(u) \sim u^{1/m}, m>1$ will give us H\"older continuity.
\begin{prop}\label{prop 4.6}
Let $u^\epsilon$ be a solution to problem (\ref{eq0}) in $Q_R(x_0,t_0)\subset Q$, $R\leq 1$, with $\beta_\epsilon$ being as the one in case (ii). Suppose that for any $m<M$ 
$$\frac{\inf_{[m,M]}\beta'_\epsilon\cdot(M-m)}{\beta_\epsilon(M)-\beta_\epsilon(m)}\geq l$$
where $l$ is a very small positive constant independent of $\epsilon$. Then
$$|u^\epsilon(x,t)-u^\epsilon(x_0,t_0)|\leq \omega(x-x_0,t-t_0)$$
where $\omega$ is a H\"{o}lder modulus of continuity with exponent depending only on $l$.
\end{prop}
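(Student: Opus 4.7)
The plan is to follow the same dyadic rescaling scheme as in Proposition \ref{prop1}, but to exploit the homogeneity-type hypothesis on $\beta_\epsilon$ to keep the oscillation-decay factor from degenerating as we zoom in. The key observation is that the assumption
$$\frac{\inf_{[m,M]}\beta'_\epsilon\cdot(M-m)}{\beta_\epsilon(M)-\beta_\epsilon(m)}\geq l$$
is exactly the scale invariance needed to make the constant $\bar{\sigma}$ produced by Theorem \ref{osc} uniform in the level of the iteration.

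First, I would set up the iteration on dyadic parabolic cylinders $Q_k(x_0,t_0):=Q_{R/2^k}(x_0,t_0)$ with $m_k:=\inf_{Q_k}u^\epsilon$, $M_k:=\sup_{Q_k}u^\epsilon$, and rescale
$$v(x,t):=\frac{u^\epsilon(x_0+(R/2^k)x,\,t_0+(R/2^k)^\alpha t)-m_k}{M_k-m_k}.$$
Just as in the proof of Proposition \ref{prop1}, $v$ solves an equation of the form in (\ref{eq0}) in $Q_1$ with a new symmetric kernel $\bar{K}$ satisfying the same two-sided bound (in a stronger sense), with $0\le v\le 1$, and with a new nonlinearity $\tilde\beta(s):=(M_k-m_k)^{-1}\beta_\epsilon((M_k-m_k)s+m_k)$. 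A direct computation gives
$$\tilde\beta'(s)=\beta'_\epsilon((M_k-m_k)s+m_k),\qquad \tilde\beta(1)-\tilde\beta(0)=\frac{\beta_\epsilon(M_k)-\beta_\epsilon(m_k)}{M_k-m_k},$$
so the scaling-sensitive ratio that drives the constant in Theorem \ref{osc} is
$$\frac{\inf_{[0,1]}\tilde\beta'(s)}{\tilde\beta(1)-\tilde\beta(0)}=\frac{\inf_{[m_k,M_k]}\beta'_\epsilon\cdot(M_k-m_k)}{\beta_\epsilon(M_k)-\beta_\epsilon(m_k)}\geq l,$$
by the hypothesis applied on $[m_k,M_k]$.

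Next, since the constant $\bar{\sigma}$ in Theorem \ref{osc} has the universal form $\bar{\sigma}=C\,r^{N_0}$ in terms of this ratio $r$, the lower bound by $l$ yields a single, $k$-independent constant
$$\bar{\sigma}_0:=C\,l^{N_0}>0$$
such that $\osc_{Q_{1/4}}v\le 1-\bar{\sigma}_0$. Translating back gives the geometric decay
$$\osc_{Q_{k+1}(x_0,t_0)}u^\epsilon\le(1-\bar{\sigma}_0)\osc_{Q_k(x_0,t_0)}u^\epsilon,$$
where here I am using the convention that one step of the dyadic iteration corresponds to shrinking by the factor $1/4$ in space (and $(1/4)^\alpha$ in time) produced by Theorem \ref{osc}. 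Iterating,
$$\osc_{Q_{R/4^k}(x_0,t_0)}u^\epsilon\le(1-\bar{\sigma}_0)^k\osc_{Q_R(x_0,t_0)}u^\epsilon,$$
and a standard argument interpolating $k$ through $|x-x_0|$ and $|t-t_0|^{1/\alpha}$ produces a H\"older modulus of continuity with exponent
$$\gamma=\frac{\log(1/(1-\bar{\sigma}_0))}{\alpha\log 4},$$
depending only on $l$ (and the universal parameters $\Lambda,\alpha,n$).

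The one step that deserves a careful check, and which I expect to be the main subtlety, is that the hypothesis must be propagated through the iteration: on the shrunken cylinder $Q_{k+1}$ we need the same ratio bound with the new pair $(m_{k+1},M_{k+1})$. This is automatic, since $[m_{k+1},M_{k+1}]\subset[m_k,M_k]$ and the hypothesis is stated for \emph{any} $m<M$ (so in particular for the new pair). Apart from that, everything is routine: scale invariance of the ratio is the whole point of assuming porous-medium-type behavior, and the rest is a one-line geometric-decay-to-H\"older conversion.
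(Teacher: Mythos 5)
Your proposal is correct and follows essentially the same route as the paper: rescale dyadically as in Proposition \ref{prop1}, observe that the hypothesis makes the ratio $\inf\tilde\beta'/(\tilde\beta(1)-\tilde\beta(0))$ bounded below by $l$ uniformly in $k$, so Theorem \ref{osc} gives the $k$-independent decay factor $1-Cl^{N_0}$, and geometric oscillation decay yields the H\"older modulus. Your explicit verification of the scale invariance of the ratio and of the exponent formula is a slightly more detailed write-up of exactly the argument the paper sketches.
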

\begin{proof}
As in the proof of the Threorem \ref{osc} we arrive at 
$$\osc_{Q_{k+1}(x_0,t_0)} u^{\e}\leq(1-Cl^{N_0}) \osc_{Q_k(x_0t,t_0)} u^{\e}$$
or
$$\osc_{Q_{k+1}(x_0,t_0)} u^\e\leq(1-Cl^{N_0})^k \osc_{Q_R}(x_0,t_0) u$$
where the universal constants $C$ and $N_0$ are the ones in the proof of the previous Proposition \ref{prop1}. The H\"older continuity of $u^\epsilon$ follows easily.
\end{proof}

The next Theorem is the main result of the present paper.

\begin{thm}\label{mth}
Let $u$ be a solution to (\ref{eq1.1}) with $\beta$ satisfying (\ref{eq1.2}) or (\ref{eq1.3}). Then $u$ is continuous with a modulus of continuity that depends on the nature of the singularity of $\beta$.
\end{thm}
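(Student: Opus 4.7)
The plan is to obtain continuity of the weak solution $u$ by transferring, via compactness, the uniform modulus of continuity already established for the approximate solutions $u^{\epsilon_n}$ in Propositions \ref{prop1} and \ref{prop 4.6}. The approximate solutions from Section \ref{exist} are smooth, uniformly bounded in $L^\infty$ independently of $\epsilon_n$, and solve the regularized equation (\ref{eq3.3}) with $\phi_{\epsilon_n}$ converging uniformly on $\R$ to $\phi$. Hence every hypothesis needed in Section \ref{cont} is available at the approximate level.

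First I would fix an arbitrary compact subcylinder $Q_R(x_0,t_0) \subset Q$ with $R \leq 1$. Using the uniform $L^\infty$ bound on $u^{\epsilon_n}$, say $\|u^{\epsilon_n}\|_\infty \leq M$, together with properties of $\beta_{\epsilon_n}$, I verify the two quantitative assumptions of Proposition \ref{prop1}: the uniform lower bound $\inf \beta'_{\epsilon_n} \geq c_1 > 0$ (granted by the slopes $a,b$ in case (i) and by condition (c) in case (ii)) and the uniform upper bound
\[
\beta_{\epsilon_n}\!\bigl(\sup_{Q_R} u^{\epsilon_n}\bigr)-\beta_{\epsilon_n}\!\bigl(\inf_{Q_R} u^{\epsilon_n}\bigr)\leq K,
\]
which follows from the construction of $\beta_{\epsilon_n}$ and, in case (ii), from condition (d) applied on $[-M,M]$ (away from a neighborhood of zero, and trivially where $\beta_{\epsilon_n}$ remains bounded). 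Proposition \ref{prop1} then yields a modulus of continuity $\omega$ for $u^{\epsilon_n}$ on $Q_{R/2}(x_0,t_0)$ that depends only on $n,\Lambda,\alpha,c_1,K$, and in particular is independent of $\epsilon_n$. In the homogeneous sub-case $\beta(u)\sim u^{1/m}$ with $m>1$, the scale-invariant quotient in Proposition \ref{prop 4.6} is bounded below uniformly in $\epsilon_n$ by the self-similarity of $\beta$ at zero, giving a Hölder modulus with exponent independent of $\epsilon_n$.

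Next I pass to the limit. By Arzelà--Ascoli applied to $\{u^{\epsilon_n}\}$ on compact subsets of $Q$, the uniform modulus $\omega$ plus the uniform $L^\infty$ bound extract a subsequence converging locally uniformly to a continuous function $\tilde u$ inheriting the same modulus $\omega$. Because the approximation scheme of Section \ref{exist} is precisely the one used to construct weak solutions of (\ref{eq1.1}), the locally uniform convergence combined with the $L^\infty$ bound allows passing to the limit in the weak formulation (local uniform convergence for the $Lu$ pairing against test functions, and $L^1$-weak convergence for $\partial_t \beta_{\epsilon_n}(u^{\epsilon_n})$ gives some $v\in\beta(u)$ in the limit). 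By uniqueness of the weak solution, $\tilde u=u$, so $u$ has modulus of continuity $\omega$ on $Q_{R/2}(x_0,t_0)$; since $(x_0,t_0)$ was arbitrary, $u$ is continuous on $Q$.

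The main obstacle I anticipate is not the compactness argument itself but the verification that Proposition \ref{prop1} can be applied with constants truly uniform in $\epsilon_n$ in case (ii), specifically when the oscillation interval of $u^{\epsilon_n}$ straddles the singularity of $\beta$ and the regularizations $\beta_{\epsilon_n}$ may become very steep near $0$. In case (ii) the bound $K$ must be read off from condition (d), which only holds on $(-1/\e,-\e)\cup(\e,1/\e)$; the passage $\epsilon_n\to 0$ has to be controlled so that $K$ stays finite as the interval $[m,M]$ is allowed to reach zero. Managing this is exactly why Proposition \ref{prop1} produces only a logarithmic-type modulus in the general case while Proposition \ref{prop 4.6} under the homogeneity assumption avoids the issue via the scale invariance of the ratio $\inf\beta'\cdot (M-m)/(\beta(M)-\beta(m))$, yielding Hölder continuity.
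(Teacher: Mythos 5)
Your proposal is correct and follows essentially the same route as the paper: obtain an $\epsilon$-uniform modulus of continuity for the approximate solutions from Propositions \ref{prop1} and \ref{prop 4.6}, then extract a (locally) uniformly convergent subsequence and identify its limit with the weak solution $u$, which inherits the modulus. The paper compresses this into a one-line appeal to ``standard methods,'' whereas you spell out the Arzel\`a--Ascoli step, the passage to the limit in the weak formulation, and the verification of the hypotheses of the propositions, which is a faithful elaboration rather than a different argument.
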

\begin{proof}
By standard methods and using Proposition \ref{prop1} or Proposition \ref{prop 4.6}, we can extract a subsequence $u^{\epsilon_m}$ that converges uniformly to our solution $u$. The nature of the modulus of continuity is depending whether we are imposing  condition (\ref{eq1.2}) or (\ref{eq1.3}).  
\end{proof}
\section{Appendix}\label{App}
Now we give the proof of Lemma \ref{isolemma} (see also Lemma 4.1 of \cite{CCV})
\begin{proof}
By contradiction, if not true then 
$$0<\s\leq\fint_{Q_{1/2}}\bigg(\big(1-\frac{v}{\l}\big)^+\bigg)^2 dxdt\leq\frac{|Q_{1/2}\cap\{\l>v\}|}{|Q_{1/2}|} $$
So, there exists a $t_0>-\frac{1}{2}$ such that 
$$|\{x\in B_{1/2}:(\l-v))(x,t_0)>0\}|>\frac{\s}{2}|Q_{1/2}|.$$
Therefore for $\mathbf{1}_{B_1}\leq\xi(x)\leq\mathbf{1}_{B_2}$ and for  $\l'=2\l$
$$\int_{\R^n}\big[\xi\big(\l'-v\big)^+\big]^2dx\geq (\l'-\l)^2|B_{1/2}\cap \{\l>v\}|\geq(\l')^2\frac{\s}{8}|Q_{1/2}|    $$
i.e.
\begin{equation}\label{ineq10}
E(t_0):=\int_{\R^n} \big(\xi(x)(\l'-v))^+(x,t_0)\big)^2dx\geq(\l')^2\frac{\s}{8}|Q_{1/2}|.
\end{equation}

On the other hand, since $v$ is a solution to (\ref{eq0}) with $\beta'_\epsilon(v)=1$, taking in its weak formulation $\eta(x,t);=\big(\zeta^2(\l'-v)^+\big)(x,t)$ with $\zeta(x,t)=\xi(x)\tau(t)$ such that $\mathbf{1}_{[-1,0}\leq\tau (t)\leq\mathbf{1}_{[-2,0]}$ and keeping the same $\xi$ as above i.e. $\mathbf{1}_{B_{1}}\leq \xi(x)\leq\mathbf{1}_{B_2}$ we have
\begin{equation}\label{eq7.2}
\int_{-\infty}^0\bigg[\int_{\R^n} (\partial_t u)(\zeta^2(\l'-v)^+)dx+\frac{1}{2} D(u,\zeta^2(\l'-v)^+)\bigg]dt=0.
\end{equation}
Then, as in the proof of Lemma \ref{mlemma}, we obtain
\begin{equation}\label{eq11}
\int\limits_{\R^n} (\zeta (\l'-v)^+)^2(x,0)dx+\int\limits_{-\infty}^0\bigg[D(\zeta (\l'-v)^+ ,\zeta (\l'-v)^+)\ \ \ \ \ \ \ \ \ \ \ \ \ \ \ \ \ \ \ \ \ \ \ \ \ \ \ \ \ \ \ \\ \ \ \ \ \ \ \ \ \ \ \ \ \ \ \ \ \ \ \ \ \ $$ 
$$\ \ \ \ \ \ \ \ \ \ \ \ \ \ \ \ \ \ \ \ \ \ \ \ \ \ \ \ \ \ \ \ \ \ \ \  +2\iint\limits_{\R^{2n}}(\l'-v)^-(x,t)(\zeta^2(\l'-v)^+)(y,t)K(x,y,t)dydx\bigg]dt$$
$$=\int\limits_{-\infty}^0\iint\limits_{\R^{2n}}(\zeta(x,t)-\zeta(y,t))^2 (\l'-v)^+(x,t) (\l'-v)^+(y,t)K(x,y,t)dydxdt\ \ \ \ \ \ \ \ \ \ \ \ \ \ \ \ \ \ \ \ \ \ \ \ \ \ \ \ \ \ \ \ \ \ \ \ \ \ \ \ \ \ $$  
$$\ \ \ \ \ \ \ \ \ \ \ \ \ \ \ \ \ \ \ \ \ \ \ \ \ \ \ \ \ \ \ \ \ \ \ \ \ \ \ \ \ \ \ \ \ \ \ \ \ \ \ \ \ \ \ \ \ \ \ \ \ \ \ \ \ \ \ \ \ \ \ \ \ \ \ \ \ \ \ \ \ \ \ \ \ \ \ \ \ \ \ \ \ \ \ \ \ \ \ \ +\int\limits_{-\infty}^0\int\limits_{\R^n}((\l'-v)^+)^2\zeta^2_tdxdt.
\end{equation}
Notice that the right hand of (\ref{eq11}) is controlled by $C_\alpha(\l')^2$
and the first two terms on the left hand side of (\ref{eq11}) are non-negative therefore, since $\tau(t)\equiv 1$ for $t\in[-1,0]$ we have
\begin{equation}\label{ineq11}
\int\limits_{-1}^{0}\iint\limits_{\R^{2n}}(\l'-v)^-(x,t)(\xi(y))^2(\l'-v)^+)(y,t)K(x,y,t)dydxdt\leq C_\alpha(\l')^2.
\end{equation}
By hypothesis 
$|\{(x,t)\in Q_1: v^+=0\}|\geq c_0\sigma|Q_1|$ and if we set $I;=\{t\in(-1,0); |\{v(.,t)\leq 0\}\cap B_{1}|\geq\frac{1}{2}c_0\s|Q_1|$ then by the fact that $\inf_{|x-y|\geq 2} K(x,y,t)\geq C\Lambda^{-1}$, the left hand side of (\ref{ineq11}) is bounded below by

$$\frac{C(1-\lambda')c_0\sigma[Q_1|}{2\Lambda}\int_I\int_{\R^n}(\xi^2 (\l'-v)^+)(y,t)dydt.$$
Therefore we deduce from (\ref{ineq11}) that 
$$\int_I\int_{\R^n}(\xi (\l'-v)^+)^2(y,t)dydt\leq\frac{C(\alpha,\L)}{c_0\s|Q_1|}(\l')^3$$
where we took $\l'< \frac{1}{2}$ and used that $(\l'-v)^+\leq\l'$. Furthermore, if $(\l')^{1/8}\leq\frac{c_0\s|Q_1|}{C(\alpha,\L)}$ then
$$\int_I\int_{\R^n}(\xi (\l'-v)^+)^2(y,t)dydt\leq (\l')^{3-\frac{1}{8}}.$$
Since $|I|\geq \frac{c_0\s |Q_{1}|}{2|B_1|}$ by choosing a set $F\subset I$ with $|F|<(\l')^{\frac{1}{8}}$ we have
$$\ \ \ \ \ \ \ \ \ \ \ \ \ \ \ \ \ \ \ \ \ \ \int_{\R^n}(\xi (\l'-v)^+)^2(y,t)dy\leq (\l')^{3-\frac{1}{4}}\ \ \ \ \ \ \ \ \ \ \ \ \ \forall\ t \in I-F.$$
Moreover for $(\l')^{1-\frac{1}{4}}\leq\frac{\s}{16}|Q_{1/2}|$ and $\forall\ t\in I-F$
$$E(t)=\int_{5\R^n}\big(\xi(x)(\l'-v)^+(x,t))\big)^2dx\leq (\l')^2\frac{\s}{16}|Q_{1/2}|  $$
and let $t^*<t_0$ ($t_0$ as above) such that $t^* \in I-F$ be the first time for which $E(t^*)\leq (\l')^2\frac{\s}{16}|Q_{1/2}|$. This follows from the fact $\frac{d}{dt}E(t)\leq C'(\l')^2$.

Now, set
$$J:=\big\{(t^*,t_0):\frac{\s}{16}|Q_{1/2}|(\l')^2< E)t)< \frac{\s}{8}|Q_{1/2}|(\l')^2\big\} $$
then,  for $t\in J\cap N$  where $N:=\{t\in [-1,0]:|\{v(.,t)\leq 0\}\cap B_{1}|\geq\frac{1}{2}c_0\s|Q_1|\}$
$$C(\l')^2\geq \int\limits_{-1}^{0}\iint\limits_{\R^{2n}}(\l'-v)^-(x,t)(\xi(y))^2(\l'-v)^+)(y,t)K(x,y,t)dydxdt\ \ \ \ \ \ \ \ \ \ \ $$
$$\ \ \ \ \ \ \ \ \  \ \geq\frac{Cc_0\s|Q_1|}{2\L}\int\limits_{J\cap N}\int\limits_{\R^n}\xi(x)(\l'-v)^+(x,t)dxdt\geq\frac{Cc_0\s|Q_1|}{4\l'\L}\int\limits_{J\cap N}\int\limits_{\R^n}\big(\xi(x)(\l'-v)^+(x.t)\big)^2dxdt $$
$$\geq\frac{C c_0 \s |Q_1|}{4\l'\L}\int\limits_{J\cap N} E(t)dt\geq \frac{Cc_0\s^2\l'|Q_1||Q_{1/2}||J\cap N|}{64\L}.\ \ \ \ \ \ \ \ \ \ \ \ \ \ \ \ \ \ \ \ \ \ \ \ \ \ $$
Therefore
$$|J\cap N|\leq\bar{C}\frac{\l'}{c_0\s^2}  $$
which implies that  for $\l'\leq (c_0\s^2|J|/2\bar{C})$
$$|J\cap N|\leq\frac{|J|}{2}.$$
Thus, for every $t\in J\setminus N$,$$M(t):=|\{\l\leq v(.,t)\leq 1\}|\geq (1-\frac{1}{2}c_0\s|Q_1|-\frac{1}{2}\s|Q_{1/2}|)>\frac{1}{2}.$$
and
$$\big|\{(x,t)\in Q_1: \l<v<1\}\big|\geq\int_{-1}^0M(t)dt\geq\int_{J\setminus N}M(t)dt\geq\frac{|J|}{4}\geq\frac{\s}{16}|Q_{1/2}|.$$
\end{proof}

\section*{Acknowledgments}
I. Athanasopoulos was supported by the University of Cyprus research funds. L. Caffarelli was supported by NSF grant DMS-2108682. The work of E. Milakis was co-funded by the European Regional Development Fund and the Republic of Cyprus through the Research and Innovation Foundation (Project: EXCELLENCE/1216/0025).

\bibliographystyle{plain}   % Here the bibliography
\bibliography{biblio}             % is inserted.
\index{Bibliography@\emph{Bibliography}}%

\vspace{2em}

\begin{tabular}{l}
Ioannis Athanasopoulos\\ University of Crete \\ Department of Mathematics  \\ 71409 \\
Heraklion, Crete GREECE
\\ {\small \tt athan@uoc.gr}
\end{tabular}
\begin{tabular}{l}
Luis Caffarelli\\ University of Texas \\ Department of Mathematics  \\ TX 78712\\
Austin, USA
\\ {\small \tt caffarel@math.utexas.edu}
\end{tabular}
\begin{tabular}{lr}
Emmanouil Milakis\\ University of Cyprus \\ Department of Mathematics \& Statistics \\ P.O. Box 20537\\
Nicosia, CY- 1678 CYPRUS
\\ {\small \tt emilakis@ucy.ac.cy}
\end{tabular}

\end{document}